\newcommand{\ii}{{\rm i}}
\newcommand{\bA}{\mathbf{A}}
\newcommand{\MM}{\mathbf{M}}
\newcommand{\bv}{\mathbf{z}}
\newcommand{\bW}{\mathbf{W}}
\newcommand{\bx}{\mathbf{x}}
\newcommand{\barb}{\overline}
\newtheorem{theorem}{Theorem}
\newtheorem{lemma}{Lemma}
\title{Necessary Spectral Conditions for Coloring Hypergraphs} 
\author{Franklin H. J. Kenter}
\begin{document}

\maketitle
\date

\begin{abstract}
Hoffman proved that for a simple graph $G$, the chromatic number  $\chi(G)$ obeys $\chi(G) \le 1 - \frac{\lambda_1}{\lambda_{n}}$ where $\lambda_1$ and $\lambda_n$ are the maximal and minimal eigenvalues of the adjacency matrix of $G$ respectively. Lov\'asz later showed that $\chi(G) \le 1 - \frac{\lambda_1}{\lambda_{n}}$ for any (perhaps negatively) weighted adjacency matrix.

In this paper, we give a probabilistic proof of Lov\'asz's theorem, then extend the technique to derive generalizations of Hoffman's theorem when allowed a certain proportion of edge-conflicts.  Using this result, we show that if a 3-uniform hypergraph is 2-colorable, then $\bar d \le -\frac{3}{2}\lambda_{\min}$ where $\bar d$ is the average degree and $\lambda_{\min}$ is the minimal eigenvalue of the underlying graph. We generalize this further for $k$-uniform hypergraphs,  for the cases $k=4$ and $5$, by considering several variants of the underlying graph.

%
\end{abstract}

\section{Introduction}

Spectral graph theory has produced several results in relation to graph coloring. Let $\lambda_{\max}$ and $\lambda_{\min}$ denote the maximal and minimal eigenvalues, respectively, of the adjacency matrix of $G$. In 1967, Wilf \cite{Wilf} showed that $\chi(G) \le 1+\lambda_{\max},$  and in 1969, Hoffman  showed $\chi(G) \ge 1-\frac{\lambda_{\max}}{\lambda_{\min}}$ \cite{Hoffman}. Hoffman's theorem was improved by Lov\'{a}sz  in 1979 with his work on Shannon capacity by considering any (even negative) weighted adjacency matrix of $G$ \cite{Lovasz}.

We aim to extend these ideas to hypergraphs. However, there are many different notions of eigenvalues for hypergraphs. These notions generally have one of two approaches. The first is to consider some higher-dimensional analog of a matrix. Friedman and Wignerson used a trilinear form to define the spectral gap of a 3-uniform hypergraph \cite{FandW}, and later, Cooper and Dutle extend this concept to hypermatrices using hyperdeterminants \cite{CandD}. The other approach is to dissect the hypergraph into several matrices, and consider these matrices in tandem. Chung used this approach by considering the homology and cohomology chains of hypergraphs. Later, Rodr\'iguez \cite{Rod} followed by Lu and Peng \cite{LandP} considered different walks on the hypergraph with varying degrees of tightness.

The only known result connecting hypergraph coloring to spectra is given by Cooper and Dutle using hypermatrices and hyperdeterminants where they prove an analog of Wilf's theorem for hypergraphs: $\chi(H) \le \lambda_{\max} + 1$  (see \cite{CandD}) . In this paper, we provide necessary spectral conditions for 2-coloring a hypergraph using the second approach to hypergraph spectra as described above. Specifically, our main strategy will be to consider several different graphs based upon the original hypergraph. A more detailed description is given in the next section. Then, we consider these different graphs and their spectra, in tandem, in order to recover information regarding the hypergraph. 

This paper is organized as follows: In section \ref{p} We give definitions and preliminaries. In section \ref{re} we state our main results. Then, we prove the results in two sections. In section \ref{11}, we prove results for graphs, including a probabilistic proof of the Hoffman-Lov\'{a}sz theorem mentioned above, and in section \ref{234}, we adapt these results to various aspects of hypergraphs. Finally, in section \ref{ex}, we give examples of applications of these results.


\section{Preliminaries}\label{p}

A graph $G = (V,E)$ is a set of vertices, $V$,  and edges, $E$, such that every $e\in E$ is an unordered pair of vertices. We say that two vertices, $u, v$, are adjacent, written $u \sim v$ if $\{u,v\} \in E$.  In this paper, we consider undirected graphs which allow for multiple edges between two vertices; however, we do not consider graphs with loops. We define the degree, $d_u$, of a vertex $u$, to be the number of edges containing $u$ where any repeated edge is counted with multiplicity. We let $\bar d$ denote the average degree of $G$. We call a graph $d$-regular if $d_u = d$ for all vertices $u$. 

A hypergraph, $H = (V,E)$ is a set of vertices, $V$,  and edges, $E$, such that every $e\in E$ is a subset of vertices. As with graphs, the degree of a vertex $u$, denoted $d_u$, is the number of hyperedges containing $u$, considering multiplicity, and the average degree will be denoted $\bar d$. We call a hypergraph $r$-uniform if for every edge $e\in E$, $|E| = r$.

We consider traditional vertex coloring of a graph $G$. A proper coloring of a graph with $k$-colors is a function $g: V \to \{1, 2, \ldots k\}$ such that $g(v) \ne g(v')$ whenever $\{v,v'\} \in E$. If such a function exists for a particular integer $k$, we say the graph is $k$-colorable.  For a graph $G$, the chromatic number, $\chi(G)$, is defined to be the least $k$ such that $G$ is $k$-colorable.

In addition, we consider the weak-coloring of a hypergraph. A proper coloring of a hypergraph is a function $h: V \to \{1, 2, \ldots k\}$ such that for every edge $e \in E(H)$, the function $h$  is not constant on $e$. As with graphs, the chromatic number of a hypergraph, $\chi(H)$, is defined to be the least $k$ such that $H$ is $k$-colorable.

A hypergraph, $H$ has an \textit{underlying graph} denoted $G(H)$ which has the same vertex set as $H$, and $e$ is an edge of $G(H)$ if $e \subset f$ for some $f \in E(H)$. For our purposes, we allow for each edge in $G(H)$ to occur once for each hyperedge containing $e$. For example, $G(K_4^3)$, the underlying graph of the complete 3-uniform hypergraph on 4 vertices, is the complete graph on 3 vertices with each edge occurring twice. More generally, the {\it $s$-set graph} of a hypergraph $H$, denoted $G^{(s)}(H)$ has the vertex set $V \choose s$, the set of subsets of $V$ with size $s$, where $\{ \{a_1, \ldots, a_s\} ,\{b_1\ldots b_s\} \}$ is an edge of $G^{(s)}(H)$ if and only if $ \{a_1, \ldots, a_s\}$ and $\{b_1\ldots b_s\} $ are disjoint, and $ \{a_1, \ldots, a_s\} \cup \{b_1\ldots b_s\} \subset f$ for some $f \in E(H).$

For a graph $G$ and hypergraph $H$, we are concerned with relating $\chi(G)$ and $\chi(H)$ with the spectrum of the adjacency matrix (or, as described in the case of $H$, several different adjacency matrices). For a graph $G$, the adjacency matrix, $\mathbf{A}$, is the $|V(G)| \times |V(G)|$ matrix where the entry $\mathbf{A}_{ij}$ is the number of edges between $i$ and $j$. A weighted adjacency matrix of a graph, denoted by $\bW$, is a matrix that satisfies $\bW_{ij} = 0$ whenever $\{i,j\} \not\in E(G)$. For our purposes, we allow for the possibility of negative weights. We denote the largest and smallest eigenvalues of a (real symmmetric) matrix $\MM$, respectively  as $\lambda_{\max}(\MM)$ and $\lambda_{\min}(\MM)$. In the context of hypergraphs, we will let $\lambda_{\min}$ denote the minimum eigenvalue of the underlying graph as described above, and we will let $\lambda^{(s)}_{\min}$ denote the minimum eigenvalue of the adjacency matrix for the $s$-subset graph.

Some of our proofs are probabilistic in nature. The key fact we use is that if a real-valued random variable $X$ obeys $m \le X \le M$ almost surely for some constants $m, M$, then $m \le \mathbb{E}[X] \le M$ where $\mathbb{E}[X]$ denotes the expectation of $X$. In particular, we combine matrix theory and probability in our application of the Courant-Fischer Theorem \cite{MA} in the following way:

If $\mathbf{x} \in \mathbb{C}^n$ is a non-zero random vector, and $\bA$ is a real-symmetric $n \times n$ matrix, then $\lambda_{\min} \le \frac{\mathbb{E}[\mathbf{x}^H \bA \mathbf{x}]}{\mathbf{x}^H \mathbf{x}} \le \lambda_{\max}$.

\section{Main results}\label{re}

We prove the following theorems.

Using a probabilistic proof, we will prove Lov\'asz's theorem:

\begin{theorem}\label{lovasz}
(Hoffman-Lov\'{a}sz 1979) \cite{Lovasz}
For any weighted adjacency matrix (even with negative weights), $\bW$, of a graph $G$

$$\chi(G) \ge 1-\frac{\lambda_{\max}(\bW)}{\lambda_{\min}(\bW)}$$

Specifically, 

$$\chi(G) \ge \max_{\bW} \left( 1-\frac{\lambda_{\max}(\bW)}{\lambda_{\min}(\bW)} \right)$$
where the maximum is taken over all weighted adjacency matrices of $G$
\end{theorem}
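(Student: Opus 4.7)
The plan is to construct a nonzero random vector $\bx$, built from a proper $k$-coloring of $G$ with $k = \chi(G)$, so that the probabilistic Courant--Fischer inequality stated at the end of Section~\ref{p} applied to $\bx$ reads
$$\lambda_{\min}(\bW) \;\le\; \frac{\mathbb{E}[\bx^T \bW \bx]}{\mathbb{E}[\bx^T \bx]} \;=\; -\frac{\lambda_{\max}(\bW)}{k-1}.$$
Once this is in hand, I rearrange: since $\bW$ has zero diagonal we have $\mathrm{tr}(\bW) = 0$, so either $\bW = 0$ (the edgeless case, handled separately as vacuous) or $\lambda_{\min}(\bW) < 0$, in which case dividing by $\lambda_{\min}(\bW)$ flips the inequality and yields $k - 1 \ge -\lambda_{\max}(\bW)/\lambda_{\min}(\bW)$, i.e.\ $\chi(G) \ge 1 - \lambda_{\max}(\bW)/\lambda_{\min}(\bW)$.

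To build $\bx$, first fix a proper $k$-coloring with color classes $V_1,\dots,V_k$ and let $\mathbf{f}$ be a unit eigenvector of $\bW$ for $\lambda_{\max}(\bW)$. Write $\mathbf{f}_i$ for the restriction of $\mathbf{f}$ to $V_i$ (padded with zeros off $V_i$), so $\mathbf{f} = \sum_i \mathbf{f}_i$. The pivotal observation is that $\mathbf{f}_i^T \bW \mathbf{f}_i = 0$ for every $i$, since any two distinct vertices in a common color class are non-adjacent (forcing $\bW_{uv} = 0$) and $\bW$ has zero diagonal. Consequently
$$\lambda_{\max}(\bW) \;=\; \mathbf{f}^T \bW \mathbf{f} \;=\; \sum_{i \ne j} \mathbf{f}_i^T \bW \mathbf{f}_j,$$
so the entire Rayleigh quotient is transported by cross-class terms and I am free to rescale each $\mathbf{f}_i$ independently. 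Next, choose any real sequence $c_1,\dots,c_k$ with $\sum_i c_i = 0$ and $S := \sum_i c_i^2 > 0$ (e.g.\ $c_1 = k-1$, $c_2 = \cdots = c_k = -1$), let $\pi$ be a uniformly random permutation of $\{1,\dots,k\}$, and set $\bx := \sum_i c_{\pi(i)} \mathbf{f}_i$. Symmetry gives $\mathbb{E}[c_{\pi(i)}^2] = S/k$ and, using $\sum_i c_i = 0$, $\mathbb{E}[c_{\pi(i)} c_{\pi(j)}] = -S/(k(k-1))$ for $i \ne j$. Combining these with the intra-class vanishing yields $\mathbb{E}[\bx^T \bW \bx] = -S\,\lambda_{\max}(\bW)/(k(k-1))$ and $\mathbb{E}[\bx^T \bx] = S/k$, producing the target ratio $-\lambda_{\max}(\bW)/(k-1)$.

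The main hurdle I anticipate is more conceptual than computational: spotting that the color-class decomposition makes the Rayleigh quotient purely cross-class is what unlocks the probabilistic averaging, and matching that to a zero-sum randomization of the classes is what cancels every contribution except the desired one. The ``specifically'' clause of the theorem then comes for free, since the argument holds uniformly for every weighted adjacency matrix of $G$, so the supremum of $1 - \lambda_{\max}(\bW)/\lambda_{\min}(\bW)$ over $\bW$ remains a lower bound on $\chi(G)$.
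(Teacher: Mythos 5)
Your proposal is correct and follows essentially the same route as the paper: both arguments randomly permute a fixed zero-sum set of labels over the color classes of a top eigenvector of $\bW$, observe that intra-class terms contribute nothing, and use linearity of expectation to show the cross-class terms average to $-\lambda_{\max}(\bW)/(k-1)$. The only substantive variation is that you use arbitrary real zero-sum coefficients $c_i$ where the paper uses the $k$-th roots of unity; since your choice does not keep $\bx^T\bx$ deterministic, you must place the expectation in the denominator of the Rayleigh bound, which is valid because $\bx^T(\bW-\lambda_{\min}\mathbf{I})\bx\ge 0$ holds pointwise and $\mathbb{E}[\bx^T\bx]>0$, whereas the paper's unit-modulus labels make $\|\bx\|$ constant and let it apply the Courant--Fischer statement exactly as written.
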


Notice that this is one side of the ``sandwich theorem'' in \cite{Lovasz} with regard to the Lov\'{a}sz Theta Function.

We then adapt the method to prove an analogous result if we allow a certain proportion of monochromatic edges:

\begin{lemma}\label{conflicts}
Let $G$ be a graph (perhaps with multiedges) with average degree $\bar d$. If the vertices of $G$ can be (improperly) colored with $k$-colors such that number monochromatic edges is at most $p|E|$, then,

$$k \ge \frac{1-\frac{\lambda_{\min}(\bA)}{\bar d}}{p-\frac{\lambda_{\min}(\bA)}{\bar d}} \text{ or more simply, } p\geq \frac{\bar d+k \lambda_{\min}(\bA)-\lambda_{\min}(\bA)}{\bar d k}$$

Where $\lambda_{\min}(\bA)$ is the smallest eigenvalue of the adjacency matrix of $G$.
\end{lemma}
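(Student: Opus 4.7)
The plan is to follow the probabilistic template developed for Theorem~\ref{lovasz}, adapted to account for monochromatic edges. Given any (improper) $k$-coloring $g: V \to \{1,\dots,k\}$ with at most $p|E|$ monochromatic edges, I will build a random vector $\mathbf{y} \in \mathbb{R}^{|V|}$ whose entries depend only on the color class of each vertex, designed so that $\mathbb{E}[\mathbf{y}_i^2] = 1$ and
$$\mathbb{E}[\mathbf{y}_i \mathbf{y}_j] \;=\; \begin{cases} 1 & \text{if } g(i) = g(j), \\ -\tfrac{1}{k-1} & \text{if } g(i) \neq g(j). \end{cases}$$
One concrete construction: take i.i.d.\ mean-zero unit-variance random variables $\alpha_1,\dots,\alpha_k$, center them via $\beta_c = \alpha_c - \tfrac{1}{k}\sum_{c'} \alpha_{c'}$, and set $\mathbf{y}_i = \sqrt{k/(k-1)}\,\beta_{g(i)}$; a short covariance check verifies both properties.

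Next I plug this into an eigenvalue identity. Splitting the sum $\sum_{i,j} \bA_{ij}\, \mathbb{E}[\mathbf{y}_i \mathbf{y}_j]$ by whether an edge is monochromatic, and recalling that $\bA$ counts multiedges with multiplicity so each edge contributes twice in the ordered sum, yields
$$\mathbb{E}[\mathbf{y}^T \bA \mathbf{y}] \;=\; 2M + 2(|E| - M)\!\left(-\tfrac{1}{k-1}\right) \;=\; \frac{2(Mk - |E|)}{k-1},$$
where $M$ is the number of monochromatic edges (counted with multiplicity). Since $\mathbb{E}[\mathbf{y}_i^2] = 1$ for every $i$, I also have $\mathbb{E}[\mathbf{y}^T \mathbf{y}] = n$.

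The eigenvalue step is to exploit that the Courant-Fischer inequality $\mathbf{y}^T \bA \mathbf{y} \ge \lambda_{\min}(\bA)\, \mathbf{y}^T \mathbf{y}$ holds pointwise in the random sample, so taking expectations of both sides preserves it. Substituting the two quantities above, using $n = 2|E|/\bar d$, and invoking $M \le p|E|$, one obtains a single linear inequality relating $p$, $k$, $\bar d$, and $\lambda_{\min}(\bA)$ that rearranges into the two stated equivalent forms.

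The main subtlety to watch is that $\mathbf{y}^T \mathbf{y}$ is itself a random variable, so it is \emph{not} correct to form $\mathbb{E}[\mathbf{y}^T \bA \mathbf{y}] / \mathbb{E}[\mathbf{y}^T \mathbf{y}]$ and declare it to lie between the extremal eigenvalues of $\bA$. The fix is precisely the pointwise application of Courant-Fischer before taking expectations; this is also the reason only the lower bound by $\lambda_{\min}(\bA)$ appears in the lemma, rather than a matching upper-bound statement. A secondary bookkeeping nuisance is keeping the factor of two from the multiedge sum straight throughout the rearrangement, but otherwise the argument is a clean reuse of the machinery built for Theorem~\ref{lovasz}.
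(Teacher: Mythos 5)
Your proposal is correct and follows essentially the same route as the paper: a random vector that is constant on color classes with pair correlation $1$ on monochromatic edges and $-\tfrac{1}{k-1}$ otherwise, fed into Courant--Fischer and then split by edge type, with $M \le p|E|$ finishing the rearrangement. The only difference is cosmetic --- the paper assigns a uniformly random permutation of the $k$-th roots of unity to the colors, so that $\mathbf{x}^H\mathbf{x}=n$ holds deterministically and the pointwise-before-expectation care you rightly flag is not needed there, whereas your centered real i.i.d.\ construction requires (and correctly receives) that extra step.
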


We remark that the first inequality above, while more complex, explicitly describes the role of $p$ in the result . In particular, when $p = 0$, if $G$ is regular, Hoffman's theorem results. Further, if $p = 1$, then the inequality allows for the graph to be colored with 1 color.

Finally, we apply the previous lemma to produce several results for hypergraphs.

\begin{theorem}\label{3u}
Let $H$ be a 2-colorable 3-uniform hypergraph with average degree $\bar d$. Then,
\[\bar d \le -\frac{3}{2} \lambda_{\min}\]
\end{theorem}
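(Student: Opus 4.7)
The plan is to derive the bound as a direct corollary of Lemma \ref{conflicts} applied to the underlying graph $G(H)$, where the 2-coloring of $H$ induces an improper 2-coloring of $G(H)$ with a controllable fraction of monochromatic edges. So first I would fix a proper 2-coloring $h : V \to \{1,2\}$ of the hypergraph $H$ and look at how it behaves on $G(H)$.

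The key combinatorial observation is that in any 3-uniform hyperedge $\{a,b,c\}$, a proper 2-coloring forces one color to appear exactly twice and the other exactly once (by pigeonhole, since monochromatic edges are forbidden). The hyperedge $\{a,b,c\}$ contributes the three edges $\{a,b\},\{b,c\},\{a,c\}$ to $G(H)$, and exactly one of these three pairs lies inside the majority color class. Hence every hyperedge contributes exactly one monochromatic edge to $G(H)$, so the total number of monochromatic edges in $G(H)$ under the coloring $h$ is $|E(H)|$, while $|E(G(H))| = 3|E(H)|$. Therefore the proportion of monochromatic edges is $p = 1/3$.

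Next I would relate the two average degrees. Since each hyperedge contributes $3$ to $\sum_v d_v^H$ and the three graph edges it generates contribute $6$ to $\sum_v d_v^{G(H)}$, we get $\bar d_{G(H)} = 2\bar d$, where $\bar d = \bar d_H$. Now I would plug $k = 2$ and $p = 1/3$ into the second inequality of Lemma \ref{conflicts}, applied to $G(H)$:
\[
\frac{1}{3} \;\geq\; \frac{\bar d_{G(H)} + 2\lambda_{\min} - \lambda_{\min}}{2\,\bar d_{G(H)}} \;=\; \frac{\bar d_{G(H)} + \lambda_{\min}}{2\,\bar d_{G(H)}}.
\]
Clearing denominators gives $\bar d_{G(H)} \leq -3\lambda_{\min}$, and substituting $\bar d_{G(H)} = 2\bar d$ yields $\bar d \leq -\tfrac{3}{2}\lambda_{\min}$, as desired.

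There is essentially no hard step: all the work is in Lemma \ref{conflicts}, and what remains is bookkeeping. The only point that requires a moment's care is verifying that each hyperedge contributes exactly one monochromatic edge to $G(H)$ (and not zero or three, which would be ruled out by the definition of proper weak coloring), and tracking the factor of $2$ between $\bar d_H$ and $\bar d_{G(H)}$ so that the final constant $3/2$ comes out correctly rather than $3$.
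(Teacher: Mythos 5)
Your proposal is correct and follows essentially the same route as the paper: both reduce the theorem to Lemma \ref{conflicts} applied to the underlying multigraph $G(H)$ with $k=2$, $p=1/3$, and average degree $2\bar d$. Your write-up is if anything slightly more careful, since you explicitly verify that each hyperedge contributes exactly one monochromatic pair and you track the degree-doubling bookkeeping that produces the constant $3/2$.
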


\begin{theorem}\label{4u}
Let $H$ be a 2-colorable 4-uniform hypergraph on $n$ vertices with average degree $\bar d$. Then,
\[\bar d \le -2 \lambda_{\min}- \frac{n-1}{3} \lambda^{(2)}_{\min}\]
\end{theorem}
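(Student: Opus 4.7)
The plan is to mirror the strategy of Theorem \ref{3u} but to apply Lemma \ref{conflicts} twice: once to the underlying graph $G(H)$ and once to the $2$-set graph $G^{(2)}(H)$. The bounds from the two applications are then combined to eliminate a free parameter. Fix a proper $2$-coloring of $H$ and let $\alpha\in[0,1]$ denote the fraction of hyperedges whose four vertices split as $(2,2)$ under this coloring; the remaining $1-\alpha$ fraction must split as $(1,3)$ or $(3,1)$, since the monochromatic split is ruled out by properness.

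First, I would apply Lemma \ref{conflicts} to $G(H)$ with $k=2$. Each $4$-uniform hyperedge contributes $\binom{4}{2}=6$ pairs to $G(H)$: a $(1,3)$-split yields $\binom{3}{2}=3$ monochromatic pairs, and a $(2,2)$-split yields $\binom{2}{2}+\binom{2}{2}=2$. Hence the fraction of monochromatic edges in $G(H)$ is $p_G=(3-\alpha)/6$. A standard double-count gives $\bar d_G = 3\bar d$, and substituting $k=2$ and $p_G$ into Lemma \ref{conflicts} (noting $\lambda_{\min}\le 0$ and $1-2p_G=\alpha/3\ge 0$) produces
\[
\bar d \;\le\; \frac{-\lambda_{\min}}{\alpha}.
\]

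Second, I would impose the following derived $2$-coloring on the vertex set $\binom{V}{2}$ of $G^{(2)}(H)$: color a pair $\{u,v\}$ of type \emph{M} if $u,v$ share a color in $H$, and type \emph{D} otherwise. A direct case check shows that each $(1,3)$-hyperedge contributes three edges to $G^{(2)}(H)$, each coupling an M pair with a D pair—so none is monochromatic—while each $(2,2)$-hyperedge contributes three edges that are all monochromatic (one MM and two DD). Thus $p_{G^{(2)}}=\alpha$. Double-counting yields $\bar d_{G^{(2)}} = 3\bar d/(n-1)$, and Lemma \ref{conflicts} with $k=2$ gives
\[
\bar d \;\le\; \frac{-(n-1)\lambda^{(2)}_{\min}}{3(1-2\alpha)}
\]
whenever $\alpha<1/2$.

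Finally, both bounds hold simultaneously, so I would eliminate $\alpha$ by setting the two right-hand sides equal, which yields an explicit $\alpha^*\in(0,1/2)$; substituting back gives exactly $\bar d \le -2\lambda_{\min} - \frac{n-1}{3}\lambda^{(2)}_{\min}$. The residual case $\alpha\ge 1/2$ must be handled separately, but there the first bound alone already forces $\bar d\le -2\lambda_{\min}$, which is no weaker than the claim since $-\lambda^{(2)}_{\min}\ge 0$. The main obstacle I anticipate is careful sign bookkeeping: $\lambda_{\min}$ and $\lambda^{(2)}_{\min}$ are nonpositive, the factor $1-2\alpha$ changes sign at $\alpha=1/2$, and one must verify that the induced pair-coloring is indeed a legitimate $2$-coloring to which Lemma \ref{conflicts} applies.
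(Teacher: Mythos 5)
Your proposal is correct and follows essentially the same route as the paper: apply Lemma \ref{conflicts} with $k=2$ to both the underlying graph and the $2$-set graph with the induced monochromatic/dichromatic pair-coloring, obtain two bounds in terms of the split-type fraction (your $\alpha$ is the paper's $1-p$), and eliminate that parameter. The only cosmetic difference is that you equalize the two bounds at a crossing point $\alpha^*$ (and separately handle $\alpha\ge 1/2$), whereas the paper keeps both inequalities linear in $p$ and simply chains them, which avoids the sign-of-denominator case split.
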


\begin{theorem}\label{5u}
Let $H$ be a 2-colorable 5-uniform hypergraph on $n$ vertices with average degree $\bar d$. Then,
\[\bar d \le -\frac{5}{2} \lambda_{\min}- \frac{5(n-1)}{12} \lambda^{(2)}_{\min}\]
\end{theorem}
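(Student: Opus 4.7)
The plan is to apply Lemma~\ref{conflicts} with $k=2$ to both the underlying graph $G(H)$ and the $2$-set graph $G^{(2)}(H)$, and then eliminate a single parameter---namely, how ``lopsided'' the color splits of the hyperedges are---by taking an appropriate linear combination of the two resulting inequalities. To set this up, fix a proper 2-coloring $g:V\to\{+1,-1\}$ of $H$. Since $H$ is 5-uniform, every hyperedge has color-split of type $(1,4)$ or $(2,3)$; let $\alpha\in[0,1]$ denote the fraction of hyperedges of type $(1,4)$.

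For $G(H)$, each 5-hyperedge contributes $\binom{5}{2}=10$ graph edges, of which $\binom{4}{2}=6$ are monochromatic under a $(1,4)$ split and $\binom{2}{2}+\binom{3}{2}=4$ under a $(2,3)$ split, so the monochromatic fraction is $p_1=(2+\alpha)/5$. Since $\bar d(G(H))=4\bar d$, Lemma~\ref{conflicts} yields
\[\frac{(2\alpha-1)\cdot 4\bar d}{5}\;\ge\;\lambda_{\min}.\]
For $G^{(2)}(H)$, I would induce a 2-coloring of the vertex set $\binom{V}{2}$ by coloring each pair $\{u,v\}$ according to whether $g(u)=g(v)$ or not. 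A short case analysis inside a single 5-hyperedge---grouping the $15$ disjoint pair-pairs by the unique vertex they omit---shows that a $(1,4)$-split contributes $3$ same-color pair-pair edges (the $3$ partitions of the four same-colored vertices) while a $(2,3)$-split contributes $9$ ($1$ mono-mono and $2$ bi-bi for each of the three omitted minority-color vertices). Thus $p_2=(3-2\alpha)/5$. Incidence counting gives $\bar d(G^{(2)}(H))=12\bar d/(n-1)$, and Lemma~\ref{conflicts} yields
\[\frac{(1-4\alpha)\cdot 12\bar d}{5(n-1)}\;\ge\;\lambda^{(2)}_{\min}.\]

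Finally I would eliminate $\alpha$ by multiplying the first displayed inequality by $2\bar d(G^{(2)}(H))$ and the second by $\bar d(G(H))$ and adding; the coefficients are chosen precisely so that the $\alpha$-terms cancel (both sides of the sum have $4\bar d(G(H))\bar d(G^{(2)}(H))\alpha$ with opposite signs), leaving
\[-\,\frac{\bar d(G(H))\,\bar d(G^{(2)}(H))}{5}\;\ge\;2\bar d(G^{(2)}(H))\,\lambda_{\min}+\bar d(G(H))\,\lambda^{(2)}_{\min}.\]
Dividing by the positive quantity $\bar d(G(H))\,\bar d(G^{(2)}(H))$, substituting $\bar d(G(H))=4\bar d$ and $\bar d(G^{(2)}(H))=12\bar d/(n-1)$, and multiplying through by $-5\bar d$ then rearranging gives exactly the advertised bound $\bar d\le -\tfrac{5}{2}\lambda_{\min}-\tfrac{5(n-1)}{12}\lambda^{(2)}_{\min}$. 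The main obstacle is the combinatorial case analysis producing the counts $3$ and $9$ for $G^{(2)}(H)$ under each split type, since everything afterwards is linear algebra whose success depends on the fortunate fact that the $\alpha$-coefficient in the $G(H)$ inequality is exactly half (in absolute value) the $\alpha$-coefficient in the $G^{(2)}(H)$ inequality, enabling a clean cancellation.
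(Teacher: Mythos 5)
Your proposal is correct and is essentially the paper's argument: apply Lemma~\ref{conflicts} with $k=2$ to the underlying graph and to the $2$-set graph under the induced colorings, compute the monochromatic fractions for the $(1,4)$ and $(2,3)$ splits (your counts $6,4$ out of $10$ and $3,9$ out of $15$ agree with the paper's $3/5-p/5$ and $1/5+2p/5$), and eliminate the split-type parameter---the paper chains an upper and a lower bound on $p$ where you take a linear combination, which is the same elimination. Two small notes: your average degree $12\bar d/(n-1)$ for $G^{(2)}(H)$ is the value actually needed to obtain the coefficient $\tfrac{5(n-1)}{12}$ in the statement (the paper's written $20\bar d/(n-1)$ appears to be a typo), and in your $(2,3)$ case the three omitted vertices that yield monochromatic pair-pair edges are the \emph{majority}-color ones, not the minority.
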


%
%

\section{Proofs of Theorem 1 and Lemma 1}\label{11}

We begin with a proof of Lov\'{a}sz's Theorem in order to demonstrate our technique. We remark that a derandomized variant of this proof can be found in \cite{Niki}. However, we build upon this randomized proof later.

\begin{proof}[Proof of theorem \ref{lovasz}]
For any simple graph $G$, let $\bW$ denote any weighted adjacency matrix for $G$.
Suppose $G$ is $k$-colorable. We will denote  the maximum and minimum eigenvalues of $\bW$ as $\lambda_{\max}$ and $\lambda_{\min}$ respectively. Let $\mathbf{z}$ denote a real unit eigenvector corresponding to $\lambda_{\max}$.
We let $g: V \to \{1,2,\ldots,k\}$ be a $k$-coloring and $\rho: \{1,2,\ldots,k\} \to \mathbb{C}$ be a function assigning each number $0, \ldots, k-1$ a unique $k$-th root of unity. If $\rho$ is chosen randomly and uniformly from all permutations, we may let $\mathbf{x}$ denote a random vector indexed by the colors of the vertices $G$ defined by $\mathbf{x}_j  = \mathbf{z}_j \cdot (\rho \circ g) (j)$.
For any $\mathbf{x}$ we have:
$$\lambda_{\min} \le \frac{\mathbf{x^H W x}}{\mathbf{x^H x}} =\mathbf{x^H W x}$$
So we have,
$$ \lambda_{\min} \le \mathbb{E}[\mathbf{x^H W x}]$$
$$ = \mathbb{E} \left[ \sum_{\{u,v\} \in E} \bW_{uv} (\barb{\bx}_u \bx_v + \barb{\bx_v} \bx_u )\right]$$
By linearity of expectation, we get
$$ = \sum_{\{u,v\} \in E}  \bW_{uv} \mathbb{E} \left[ \barb{\bx_u} \bx_v + \barb{\bx_v} \bx_u \right]$$

where the sum is over all unordered pairs of vertices that form an edge. Thus, we have,

$$ = \sum_{\{u,v\} \in E}  \bW_{uv} \mathbb{E}[(\bv_u \bv_v) ((\rho \circ g) (u) \barb{(\rho \circ g) (v)} +(\rho \circ g) (v) \barb{(\rho \circ g) (u)}] $$
Since $v$ is deterministic, 
$$ = \sum_{\{i,j\} \in E} (\bv_i \bv_j \bW_{uv}) \mathbb{E}[((\rho \circ g) (i) \barb{(\rho \circ g) (j)} +(\rho \circ g) (j) \barb{(\rho \circ g) (i)}]$$
Observe that for any coloring, $\phi$, any permutation of the colors is also a coloring. Hence, the random quantity $((\rho \circ g) (u) \barb{(\rho \circ g) (v)} +(\rho \circ g) (v) \barb{(\rho \circ g) (u)})$ has an equal probability for taking on each of the possible values . Hence, $\mathbb{E}[((\rho \circ g) (u) \barb{(\rho \circ g) (v)} +(\rho \circ g) (v) \barb{(\rho \circ g) (u)} = 2 \sum_{j=1}^{k-1} \exp(2 \pi \ii j /k) = \frac{-2}{k-1}.$ Hence, for the last sum above we get:

\begin{eqnarray*}
&=&  \frac{-1}{k-1} \sum_{\{u,v\} \in E} 2 \bv_u \bv_v \bW_{uv} = \frac{-1}{k-1} \sum_{\{u,v\} \in E} \barb{\bv_u}\  \bW_{uv} \bv_v + \barb{\bv_v}\  \bW_{uv} \bv_u\\
&=& \frac{-1}{k-1} \mathbf{\bv^H \bW \bv} = \frac{-1}{k-1} \lambda_{\max}
\end{eqnarray*}

Altogether we have:
$$\lambda_{\min} \le  \frac{-1}{k-1} \lambda_{\max} $$
And solving for $k$ we get:
$$1-\frac{\lambda_{\max}}{\lambda_{\min}} \le k$$
Choosing $k = \chi(G)$ proves the theorem.
\end{proof}

Many of the previous proofs and the Hoffman-Lov\'{a}sz theorems use discrete methods such as matrix partitioning and covers  \cite{Hoffman} whereas the proof above is analytic which gives us an advantage. We adapt the proof above for lemma \ref{conflicts}:


%
%

\begin{proof}[Proof of lemma \ref{conflicts}]

By hypothesis, suppose there is an improper $k$-coloring, $g: G \to \{1,2,\ldots,k\}$. As in the previous proof, let $\rho:  \{1,2,\ldots,k\} \to \mathbb{C}$ be a function designating each number $1, \ldots, k$ a unique $k$-th root of unity. Similarly, we vary $\rho$ randomly chosen uniformly from all possible permutations, and we let $\mathbf{x}$ denote a random vector determined by the colors of the vertices of $G$ defined by $\mathbf{x}_j  = (\rho \circ g) (j)$.
Let $n$ denote the number of vertices of $G$. 
For any $\mathbf{x}$ we have:
$$\lambda_{\min} (G) \le \frac{\mathbf{x^H \bA x}}{\mathbf{x^H x}} = \frac{\mathbf{x^H \bA x}}{n}$$
So we have,
\begin{eqnarray*}
 n \lambda_{\min} &\le& \mathbb{E}[\mathbf{x^H \bA x}] \\
 &=& \sum_{\{u,v\} \in E}  \bA_{uv} \mathbb{E}[\barb{\bx_u} \bx_v + \barb{\bx_v} \bx_u]
 \end{eqnarray*}
where the sum goes over all unordered pairs of vertices.

Now we consider choosing a random edge, denoted by $\ell$, chosen uniformly from all edges. The monochromatic edges contribute 1 to the sum whereas the monochromatic edges contribute on average $\frac{-1}{k-1}$.  Since the proportion of edges which are monochromatic is at most $p$, 
$$ \sum_{\{u,v\} \in E}  \mathbb{E}[\barb{\bx_u} \bx_v + \barb{\bx_v} \bx_u] \le  \sum_{\{u,v\} \in E}2\frac{p k -1}{k-1}$$

Hence, 

$$n \lambda_{\min} \le  \sum_{\{u,v\} \in E} 2 \bA_{uv} \frac{p k -1}{k-1}=   \frac{p k -1}{k-1} \sum_{\{u,v\} \in E}  2 \bA_{uv}$$
Since $\sum_{\{u,v\} \in E} 2 \bA_{uv} = n \bar d$, we have:
$$\lambda_{\min} \le \frac{\bar d(p k -1)}{k-1}$$
Solving for $k$ (keeping in mind that $\lambda_{\min}$ is negative) yields the result.
\end{proof}

\section{Proofs of the remaining results}\label{234}

\begin{proof}[Proof of Theorem \ref{3u}]
Suppose $H$ is a 2-colorable 3-uniform hypergraph with average degree $\bar d$. If we consider a specific 2-coloring of $H$, observe that for every edge in $H$, there must be 2 vertices of one color and 1 vertex of the other. Hence, the underlying graph of $H$ can be 2-colored if we allow $1/3$ of the edges to be monochromatic. By applying lemma \ref{conflicts}, to the underlying graph which has average degree $2 \bar d$:
$$2 \ge \frac{1-\frac{\lambda_{\min}}{2 \bar d}}{1/3-\frac{\lambda_{\min}}{2 \bar d}}$$
Solving for $\bar d$ yields the result.
\end{proof}

\begin{proof}[Proof of Theorem \ref{4u}]

Suppose $H$ is a 2-colorable 4-uniform hypergraph on $n$ vertices. Let $g: V(H) \to \{0,1\}$ denote a specific 2-coloring of $H$. Observe that for every edge in $H$, there are two cases: 2 vertices of each color; or 3 vertices of one color and 1 of the other. Let $p$ denote the proportion of edges with 3 of one color and 1 of the other. Hence, the underlying graph of $H$ can be 2-colored using $g$ if we allow $1/3+p/6$ of the edges to be monochromatic. 
$$2 \ge \frac{1-\frac{\lambda_{\min}}{3 \bar d}}{1/3+p/6-\frac{\lambda_{\min}}{3 \bar d}}$$
This reduces to $p \le 1+ \frac{\lambda_{\min}}{\bar d}$.

Next, we consider the 2-subset graph of $H$,  $G^{(2)}(H)$. Note that the average degree is $ \frac{3 \bar d}{n-1}$. Let $h: G^{(2)}(H) \to \{0,1\}$ be an (improper) 2-coloring of $G^{(2)}(H)$ as follows: If $\{a,b\} \in V(G^{(2)}(H))$, let $h(\{a,b\})=0$ if $g(a) = g(b)$, and let $h(\{a,b\})=1$ otherwise. For this 2-coloring, the proportion of monochromatic edges is $1-p$. Hence by applying lemma \ref{conflicts},

$$2 \ge \frac{1-\frac{(n-1) \lambda^{(2)}_{\min}}{{24d}}}{1-p-\frac{(n-1) \lambda^{(2)}_{\min}}{{24d}}}$$.

This reduces to \[ \frac{1}{2} - \frac{(n-1) \lambda^{(2)}_{\min}}{6 \bar d} \le p\]

Altogether, we have:

$$ \frac{1}{2} - \frac{(n-1) \lambda^{(2)}_{\min}}{6 \bar d} \le p \le 1+ \frac{\lambda_{\min}}{\bar d}$$

Eliminating the intermediary $p$, and solving for $\bar d$ yields the result. 
\end{proof}

\begin{proof}
[Proof of Theorem \ref{5u}]

The proof follows with exactly the same method as the proof for Theorem \ref{4u}. Let $p$ be the number of hyperedges with 3 of one color and 2 of another. Then, the underlying graph can be 2-colored with $3/5-p/5$ of the edges as monochromatic, and the 2-subset graph can be 2-colored with at most $1/5+2p/5$ monochromatic edges. The average degree of the underlying graph is $4\bar d$, and the average degree of the 2-subset graph is ${20 \bar d} / {(n-1)}$. The remainder follows just as the previous proof and is omitted.
\end{proof}

\section{Examples}\label{ex}


Theorem \ref{3u} is tight. For example, consider the complete 3-uniform hypergraph on 4 vertices, $K_4^3$. However, for the complete 3-uniform graph on 5 vertices $K_5^3$, we have $\bar d = 6$. Also, the corresponding underlying graph is the complete graph where each edge has weight 3, so $\lambda_{\min} = -3$. Hence,  $\frac{3}{2} \lambda_{\min} < \bar d$, and so theorem \ref{3u} indicates that $K_5^3$ is not 2-colorable.

We now show theorem \ref{4u} is applicable. Let $K_n^4$ be the complete 4-uniform graph on $n$ vertices. Clearly, $K_n^4$ is not 2-colorable for $n\ge 7$. Note that the average degree of $K_n^4$ is $\Theta(n^3)$. Observe that the underlying graph of $K_n^4$ is the complete graph on $n$ vertices where each edge is duplicated ${n-2} \choose 2$ times. Hence, $\lambda_{\min} = -\Theta(n^2)$. Likewise, observe that the 2-subset graph of $H$ is a strongly regular graph on ${n} \choose 2$ vertices where each vertex has degree ${n-2} \choose 2$.  Also  two disjoint (and hence, adjacent) vertices have ${n-4} \choose 2$ common neighbors and non-adjacent vertices have ${n-3} \choose 2$ common neighbors. Hence, using the formula for strongly-regular graphs \cite{GandR}, $\lambda^{(2)}_{\min} = 6-2n= -\Theta(n)$. If $H_n$ were 2-colorable for large $n$, then by theorem \ref{4u}, $\bar d = \Theta(n^3) \le \Theta(n^2)$ which poses a contradiction. Therefore, theorem \ref{4u} successfully excludes $K_n^4$ graphs from being 2-colorable for large enough $n$.

For a less trivial example, consider $H =(V,E)$ a 4-uniform hypergraph on 18 vertices with where $V= \{1, \ldots 18\}$, and $E = \{ \{a,b,c,d\} \subset V : a+b+c+d \equiv 0 \mod 3 \}$. A simple calculation in {\it MATLAB} shows that $\lambda_{\min}^{(2)} \approx -39.7119$ and $\lambda_{\min} = -45$. Hence if $H$ is 2-colorable, theorem \ref{4u} requires $\bar d = \frac{680}{3} \le  -2 \lambda_{\min} - \frac{17}{3} \lambda_{\min}^{(2)} \approx 315.034$. Therefore, $H$ is not 2-colorable.

 \end{document}